\newtheorem{theorem}{Theorem}[section]
\newtheorem{proposition}[theorem]{Proposition}
\newtheorem{corollary}[theorem]{Corollary}
\newtheorem{lemma}[theorem]{Lemma}
\theoremstyle{definition}
\theoremstyle{remark}
\newtheorem{question}{Question}
\numberwithin{equation}{section}
\renewcommand{\hat}{ \widehat}
\newcommand{\inte}{{\mathrm{int}}}
\renewcommand{\epsilon}{\varepsilon}
\begin{document}
\sloppy

\title[Invariant incompressible surfaces in reducible 3-manifolds]{Invariant incompressible surfaces in reducible 3-manifolds}

\author{Christoforos Neofytidis }
\address{Department of Mathematical Sciences, {\smaller SUNY} Binghamton, Binghamton NY 13902-6000, USA}
\email{chrisneo@math.binghamton.edu}
\author{Shicheng Wang}
\address{Department of Mathematics, Peking University, Beijing 100871, China}
\email{wangsc@math.pku.edu.cn}
\thanks{We would like to thank Professor Xuezhi Zhao for helpful conversations and comments. The first author gratefully acknowledges the hospitality and support of Peking University where part of this research was carried out.}

\subjclass[2010]{57M50, 57N37, 37D20, 51H20}

\date{\today}

\begin{abstract}
We study the effect of the mapping class group of a reducible 3-manifold $M$ on each incompressible surface that is invariant under a self-homeomorphism of $M$.
As an application of this study we answer a question of  F. Rodriguez Hertz, M. Rodriguez Hertz and R. Ures: 
A reducible 3-manifold admits an Anosov torus if and only if one of its prime summands is either the 3-torus, the mapping torus of $-id$, or the mapping torus of a hyperbolic automorphism. 
\end{abstract}

\maketitle
\vspace{-.5cm}

\section{Introduction}

 A closed oriented connected 3-manifold $M$ is called irreducible if every embedded 2-sphere in $M$ bounds a 3-ball; otherwise $M$ is called reducible. We say that an embedded closed oriented connected surface $F\neq S^2$ in $M$ is incompressible if whenever $\partial D\cap F= \partial D$ for an embedded disc $D\subset M$, then $\partial D$ bounds a disc in $F$; equivalently the homomorphism $\pi_1(F) \to \pi_1(M)$ induced by inclusion is injective.
 We refer to \cite{He} for standard notions and terminology on 3-manifolds.
 
If $M$ is a reducible 3-manifold, then by the Kneser-Milnor theorem it can be decomposed,
 uniquely up to diffeomorphism, into a finite connected sum 
 $$M = M_ 1 \# M_ 2  \# \cdots \# M_n\#(\#_m S^1\times S^2),$$
 where each $M_i$ is irreducible and $m\geq 0$. 
 
 The following fundamental result on the mapping class groups of reducible 3-manifolds was first announced in \cite{CR}. An elegant  proof of this theorem was given by McCullough in~\cite[pp. 70--71]{Mc}. As McCullough remarks, his proof is based on an argument of Scharlemann which appeared in~\cite[Appendix A]{Bon}. 
 

\begin{theorem} {\normalfont(\cite[page 69]{Mc}).} \label{Mc1} 
Let $M$ be a compact oriented connected 3-manifold. Then any orientation-preserving homeomorphism of $M$ is  isotopic to a composite of the following four types of homeomorphisms: 
\begin{itemize}
\item[(1)] homeomorphisms  preserving  summands;
\item[(2)] interchanges of homeomorphic summands;
\item[(3)] spins of $S^ 1 \times S^ 2$ summands;
\item[(4)] slide homeomorphisms.
\end{itemize}
\end{theorem}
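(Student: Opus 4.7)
The plan is to fix a maximal system $\Sigma=S_1\cup\cdots\cup S_k$ of pairwise disjoint reducing 2-spheres in $M$, chosen so that cutting $M$ along $\Sigma$ yields the punctured irreducible summands $M_i\setminus\inte(B^3)$ together with $m$ pieces of the form $S^2\times[0,1]$ coming from the $S^1\times S^2$ factors. Such a system exists by the Kneser--Milnor decomposition. Given an orientation-preserving self-homeomorphism $f$ of $M$, the goal is to isotope $f$, possibly after composing with homeomorphisms of types (3) and (4), so that $f(\Sigma)=\Sigma$ setwise; once this is achieved, the permutation that $f$ induces on the complementary pieces will be accounted for by types (1) and (2).

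First I would compare $\Sigma$ and $f(\Sigma)$, which are both maximal reducing sphere systems in $M$. The key technical ingredient is a uniqueness-up-to-slides statement: any two maximal systems of reducing 2-spheres in $M$ differ by an ambient isotopy composed with slide homeomorphisms. I would prove this by putting $f(\Sigma)$ into general position with $\Sigma$ and then applying innermost-disk and innermost-circle surgeries on $f(\Sigma)\cap\Sigma$, using Laudenbach's sphere theorem (which controls $\pi_2$ of a connected sum) to guarantee that a circle bounding a disk on one side bounds on the other. Each surgery can be realized by an ambient isotopy; when the process terminates with the two systems disjoint but distinct, one identifies them via a slide supported in a neighborhood of an arc joining them. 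This is the Haken/Scharlemann normal-form argument cited in the statement.

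Second, assuming $f(\Sigma)=\Sigma$, I would analyze the induced permutation of the components of $\Sigma$ and of $M\setminus\Sigma$. A permutation that carries an $S^2\times[0,1]$-piece to itself by interchanging its two boundary spheres is realized by the spin of the corresponding $S^1\times S^2$ summand (type 3); a permutation that exchanges a punctured copy of $M_i$ with a punctured copy of $M_j$ (necessarily with $M_i\cong M_j$) is realized by an interchange of homeomorphic summands (type 2). After composing with such homeomorphisms, the new homeomorphism preserves each component of $\Sigma$ and each complementary piece. Capping the boundary spheres of each piece with 3-balls via the Alexander trick produces a self-homeomorphism of each $M_i$ or $S^1\times S^2$, and reassembling gives a summand-preserving homeomorphism of $M$, namely type (1).

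The main obstacle is the first step. The surgery sequence itself is classical, but verifying that each reduction can be realized by an isotopy or by a slide (rather than some more general operation) is delicate: this is precisely where the slide homeomorphisms of type (4) arise, and tracking them through the induction — together with matching up the innermost-disk choices across the two sides of an intersection circle — is the subtle part, which is why McCullough follows the careful bookkeeping introduced by Scharlemann in the appendix of \cite{Bon}.
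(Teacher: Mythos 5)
Your overall skeleton matches the Scharlemann--McCullough argument that the paper relies on (put $f(\Sigma)$ transverse to $\Sigma$, eliminate intersection circles innermost first, then match up the disjoint systems and absorb the residual permutation into types (1)--(3)), but the central step is stated incorrectly, and the error sits exactly where the theorem has content. You assert that ``each surgery can be realized by an ambient isotopy'' and that slides enter only at the very end, ``when the process terminates with the two systems disjoint but distinct.'' This reverses the roles of isotopies and slides. Take an innermost circle $C$ of $\Sigma\cap f(\Sigma)$ on $f(\Sigma)$, bounding a disc $E_1\subset f(\Sigma)$ with interior disjoint from $\Sigma$. If $E_1$ lies in a punctured summand $M_i'$, irreducibility of $M_i$ produces a ball and an isotopy does eliminate $C$. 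But if $E_1$ lies in the punctured cell $W$ with $C\subset S_i$, then $E_1$ together with a disc $E_2\subset S_i$ bounds a punctured 3-cell $W_1\subset W$ which in general has other summands $M_k$ (and ends of $S_j\times I$'s) attached to it; no ambient isotopy pushes $E_1$ across $W_1$. This is precisely where slide homeomorphisms are forced: one slides each $M_k$ attached to $W_1$ around an arc out of $W_1$, reducing its punctures until $W_1$ is an honest 3-ball, and only then isotopes $E_1$ across it. Your closing paragraph concedes that this is ``the subtle part,'' but conceding it is not the same as doing it, and your explicit earlier claim that the surgeries are isotopies is false.

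The final step is misstated in the opposite direction: once $\Sigma\cap f(\Sigma)=\emptyset$, primeness of the $M_i$ forces every component of $f(\Sigma)$, none of which bounds a ball, to be parallel to a component of $\Sigma$, so the two systems cobound a disjoint union of copies of $S^2\times[0,1]$ and are identified by an isotopy, not by ``a slide supported in a neighborhood of an arc joining them.'' (Laudenbach's theorem is also not the relevant input for the innermost-circle step: $C$ lies on the sphere $S_i$ and automatically bounds discs on both sides there; what matters is what the resulting sphere $E_1\cup E_2$ bounds in $M$.) Your second stage --- realizing the induced permutation of the complementary pieces by interchanges and spins, then capping off to obtain a type (1) homeomorphism --- is fine and agrees with the paper's description.
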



In fact, the proof of Theorem \ref{Mc1} presented in \cite[pp. 70-71]{Mc} contains a finer form of that statement as follows:

\begin{theorem} \label{Mc2} 
Let $M$ be a compact oriented connected 3-manifold and $f$ be an orientation-preserving homeomorphism of $M$. Then 
$$hf  =g_3 g_2 g_1,$$ 
where $h$ is a finite composition of homeomorphisms of type 4 (slide homeomorphisms) and isotopies on $M$, and each $g_k$ is a composition of finitely many homeomorphisms of type $k$ on $M$. 
\end{theorem}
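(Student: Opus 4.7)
The plan is to derive Theorem \ref{Mc2} from Theorem \ref{Mc1} by sorting the factors of the given decomposition. By Theorem \ref{Mc1}, $f$ is isotopic to a composition of homeomorphisms of types (1)--(4). I would first move every slide (type 4) to the leftmost position of the word. Whenever a slide $s$ appears immediately to the right of a factor $\alpha$ of another type, I would use the rewriting $\alpha s = (\alpha s \alpha^{-1})\, \alpha$ to conjugate the slide past $\alpha$ and place it on the left. The essential point is that $\alpha s \alpha^{-1}$ is again a slide (on a possibly different summand along a possibly different loop), because a homeomorphism of type (1), (2), or (3) preserves the connected-sum structure of $M$ in a controlled way. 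Iterating this and collecting the accumulated slides together with intervening isotopies into a single prefix $h$, I reduce to the situation where $hf$ is a composition of factors of types (1), (2), and (3) only.

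Next, I would sort this remaining word into the order $g_3 g_2 g_1$ using the conjugation identities
\begin{align*}
\phi \sigma &= \sigma (\sigma^{-1} \phi \sigma), \\
\phi \tau &= \tau (\tau^{-1} \phi \tau), \\
\sigma \tau &= (\sigma \tau \sigma^{-1})\, \sigma,
\end{align*}
where $\phi$ is of type (1), $\sigma$ of type (2), and $\tau$ of type (3). In each identity the conjugate factor remains of the same type: a summand-preserving homeomorphism conjugated by an interchange or a spin is again summand-preserving; a spin on one $S^1 \times S^2$ summand conjugated by an interchange swapping it with another $S^1 \times S^2$ summand becomes a spin on the latter. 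Using these moves, every type (1) factor is pushed to the right of all type (2) and type (3) factors, and every type (3) factor is pushed to the left of all type (2) factors. Collecting the resulting blocks into single composite factors $g_1$, $g_2$, $g_3$ yields $hf = g_3 g_2 g_1$ as desired.

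The main obstacle is the first step: verifying that the conjugate of a slide by a type (1), (2), or (3) homeomorphism really is isotopic to a slide. This is exactly the kind of bookkeeping carried out in McCullough's proof of Theorem \ref{Mc1}, which explicitly tracks the effect on the chosen system of summand $2$-spheres; accordingly, in practice I would extract the finer statement directly from the structure of the proof in \cite[pp.~70--71]{Mc} rather than rederive the commutation relations by hand.
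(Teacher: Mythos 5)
Your proposal is essentially correct, but it takes a genuinely different route from the paper. The paper does not prove Theorem \ref{Mc2} by post-processing the statement of Theorem \ref{Mc1}; rather, it observes that McCullough's \emph{proof} of Theorem \ref{Mc1} already produces the statement in this form: one post-composes $f$ with a sequence of slide homeomorphisms and isotopies, each chosen to reduce the number of components of $\Sigma\cap f(\Sigma)$, until $hf(\Sigma)=\Sigma$; a homeomorphism preserving the sphere system $\Sigma$ (hence $W$) then visibly factors as $g_3g_2g_1$. Your alternative --- take Theorem \ref{Mc1} as a black box and sort the word by conjugation --- is viable. The sorting of the type (1), (2), (3) factors is unproblematic, since all three types carry $W$ to $W$ and type (1) is characterized by being the identity on $W$, so the conjugates you form are again exactly of the asserted types. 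The step you rightly flag as the main obstacle, that $\alpha s\alpha^{-1}$ is again a slide when $\alpha$ is of type (1), (2) or (3), is true and provable directly from the definition: such an $\alpha$ preserves the sphere system, hence induces a homeomorphism between the corresponding punctured manifolds $\hat M$, and conjugating the defining isotopy $J_t$ exhibits $\alpha s\alpha^{-1}$ as a slide of the summand $\alpha(M_i)$ around the arc $\alpha(\gamma)$ (up to isotopy and possibly a rotation about the decomposition sphere, which is within the ambiguity already present in the definition of a slide); one also needs that $s^{-1}$ is a slide (around the reversed arc), since your prefix $h$ is the inverse of the accumulated slides. So your argument closes, but note what the two approaches buy: yours is shorter and treats Theorem \ref{Mc1} as a black box, whereas the paper's route keeps explicit control over \emph{which} arcs the slides in $h$ are performed along and where the isotopies are supported. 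That control is exactly what is exploited later (Proposition \ref{not touching F}) to arrange that $h$ is the identity on an invariant incompressible surface; your word-sorting derivation, which inherits the slides from an arbitrary decomposition and then conjugates them, would not by itself support that refinement.
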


Theorem \ref{Mc2} already implies that $hf$ permutes the prime summands of $M$. The main result of this paper is that $h$ can be chosen so that its restriction on each $f$-invariant incompressible surface is the identity.

\begin{theorem}\label{main theorem}
Let $f\colon M\to M$ be an orientation-preserving homeomorphism of a closed oriented connected $3$-manifold $M$. If $F$ is an incompressible surface in $M$ with $f(F)=F$, then $h$ in Theorem \ref{Mc2} can be chosen to be the identity on $F$.
\end{theorem}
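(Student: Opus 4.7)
The plan is to adapt the proof of Theorem~\ref{Mc2} (as presented in \cite[pp.~70--71]{Mc}) so that the corrective composite $h$ can be chosen to fix $F$ pointwise. The guiding idea is that the incompressibility of $F$ provides enough flexibility first to put $F$ in standard position with respect to the sphere system defining the prime decomposition, and then to realize each slide homeomorphism making up $h$ with support disjoint from $F$.

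First I would fix a system $\Sigma\subset M$ of pairwise disjoint essential $2$-spheres realizing the Kneser--Milnor decomposition and, after an isotopy of $f$, arrange that $f(\Sigma)=\Sigma$. Because $F$ is incompressible and $F\neq S^2$, an innermost-disk argument on the circles of $F\cap\Sigma$, carried out $\langle f\rangle$-equivariantly using $f(F)=F$, produces an isotopy of $F$ making $F\cap\Sigma=\emptyset$. After this adjustment, $F$ lies entirely in a single punctured prime summand of $M$.

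Next I would revisit Scharlemann's construction in McCullough's presentation while carrying $F$ along. Each slide homeomorphism appearing in $h$ slides a sphere $S\in\Sigma$ along a based loop $\alpha$, and is supported in a regular neighborhood of $S\cup\alpha$; since $S$ is disjoint from $F$, it suffices to arrange $\alpha$ disjoint from $F$ as well. Replacing $\alpha$ by a homotopic loop changes the slide only by an ambient isotopy, and by using a bicollar on the two-sided surface $F$ this isotopy may itself be taken to fix $F$. The slide loops in Scharlemann's argument only serve to reposition summands and to cancel intersection circles between $\Sigma$ and $f(\Sigma)$, so after the standard-position step they can be confined to the component of $M\setminus F$ containing the relevant $S$. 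The resulting composite of slides and isotopies is then an $h$ as in Theorem~\ref{Mc2} that is the identity on $F$.

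The principal obstacle will be making this last point rigorous: at every stage of the inductive construction in \cite[pp.~70--71]{Mc} one must verify that the slide loop produced can actually be homotoped into the component of $M\setminus F$ containing its slide sphere $S$, which requires bookkeeping about how $f$ permutes the summands with respect to the partition of summands on either side of $F$. The equivariant standard-position step is also somewhat delicate but is routine once one sets up the equivariant innermost-disk machinery.
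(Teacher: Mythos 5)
Your high-level strategy --- put $F$ in standard position with respect to the sphere system and then arrange every factor of $h$ to have support disjoint from $F$ --- is the same as the paper's, but two of your steps do not work as stated, and the step you yourself flag as ``the principal obstacle'' is precisely the content of the proof and is left unresolved. First, you cannot ``after an isotopy of $f$, arrange that $f(\Sigma)=\Sigma$'': achieving $hf(\Sigma)=\Sigma$ is exactly what requires the slide homeomorphisms in Theorem~\ref{Mc2}, so this opening step is circular. The argument in the text instead isotopes $F$ off $\Sigma$ (an ordinary, non-equivariant innermost-disk argument, absorbed by conjugating $f$ and replacing $F$ by its image), and then records the key invariant that, because $f(F)=F$, one automatically has \emph{both} $F\cap\Sigma=\emptyset$ and $F\cap f(\Sigma)=\emptyset$. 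This second disjointness, which never appears in your outline, is what makes everything else work.

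Second, the verification you defer is carried out not by homotoping a given slide loop off $F$ --- slides along homotopic arcs are only isotopic, the connecting ambient isotopy has no reason to fix $F$, and a bicollar on $F$ does not help if the homotopy of arcs crosses $F$ --- but by \emph{constructing} each slide arc disjoint from $F$ from the outset: the arc consists of portions lying in the punctured cell $W$ (disjoint from $F$ since $F$ sits in one $M_i'$) together with a portion running alongside an arc $\alpha_0$ of $f(\Sigma)$ (disjoint from $F$ by the invariant above); the slide is then supported in a regular neighborhood of $S_k\cup\alpha$ missing $F$, by Lemma~\ref{slide}. Finally, you do not address the isotopy factors of $h$ at all: eliminating an innermost circle whose disk lies in a summand requires pushing across a $3$-ball $B$, and the terminal step requires pushing $f(\Sigma)$ across product regions onto $\Sigma$; in both cases one needs the incompressibility of $F$ (not merely transversality or disjointness of boundaries) to conclude that $F$ misses $B$, respectively the product regions, so that these isotopies also restrict to the identity on $F$. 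Without these three ingredients the argument does not close up.
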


An immediate consequence of Theorem \ref{main theorem} and its proof is the following:

\begin{corollary}\label{preserve summands}
Suppose  $f\colon M\to M$ is an orientation-preserving homeomorphism of a closed oriented connected 3-manifold $M$ and $F$ an $f$-invariant incompressible surface.
Then $F$ can be isotoped into a prime summand of $M$ so that $hf\colon M\to M$ 
preserves this prime summand and $F$, 
where $h$ is a finite composition of slide homeomorphisms and isotopies.
\end{corollary}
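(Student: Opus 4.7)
The plan is to apply Theorem~\ref{main theorem} directly and then observe that the building blocks in its factorization permute the prime summands. Starting with $h$ from Theorem~\ref{main theorem}, we have $h|_F=\mathrm{id}_F$; in particular $(hf)(F)=F$. By Theorem~\ref{Mc2} we have $hf=g_3 g_2 g_1$, with each $g_k$ a composition of type-$k$ homeomorphisms. The strategy is to show that $hf$ permutes the prime summands of $M$, then isotope $F$ into a single summand, and finally conclude that this summand is fixed.

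Fix a sphere system $\Sigma\subset M$ realizing the Kneser-Milnor decomposition. Type-1 maps preserve each summand (and may be taken to preserve $\Sigma$), type-2 interchanges swap homeomorphic summands by permuting components of $\Sigma$, and type-3 spins are supported inside a single $S^1\times S^2$ summand. Hence each $g_k$, and therefore $hf$, acts as a permutation on the set of prime summands. Next, because $F$ is incompressible with $F\neq S^2$, a standard innermost-circle argument on $F\cap\Sigma$ produces an ambient isotopy $\phi$ of $M$ so that $\phi(F)$ is disjoint from $\Sigma$. Since $F$ is connected, $\phi(F)$ lies in a single component of $M\setminus\Sigma$, i.e.\ inside one prime summand $P$.

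To pass from the original setup to the isotoped one, apply Theorem~\ref{main theorem} to the conjugate $\tilde f=\phi f\phi^{-1}$, which is isotopic to $f$ and satisfies $\tilde f(\phi(F))=\phi(F)$. This yields $\tilde h$, still a composition of slide homeomorphisms and isotopies, with $\tilde h|_{\phi(F)}=\mathrm{id}$. Writing $\tilde f=\iota\circ f$ for the isotopy $\iota=\phi f\phi^{-1}f^{-1}$, set $h'=\tilde h\circ\iota$; this is again a composition of slides and isotopies, and $h'f=\tilde h\tilde f$ preserves $\phi(F)\subset P$. By the permutation argument above, $h'f$ still permutes the set of prime summands, hence it must send $P$ to itself.

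The main obstacle is the bookkeeping in this last step: one must verify that the class of compositions of slide homeomorphisms and isotopies is preserved under conjugation by an ambient isotopy and composition with an additional isotopy. This is immediate because ambient isotopies themselves belong to the allowed class, but it is worth stating explicitly so that the modified $h'$ indeed satisfies the conclusion of the corollary when applied to the isotoped surface $\phi(F)$.
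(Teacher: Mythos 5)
Your proposal is correct and follows essentially the same route as the paper: isotope $F$ off the sphere system $\Sigma$ into a single summand (conjugating $f$ accordingly, as in Lemma~\ref{phi(F)}), apply Theorem~\ref{main theorem} to get $h$ with $h|_F=\mathrm{id}$, and conclude from the fact that $hf$ preserves $\Sigma$ (equivalently, equals $g_3g_2g_1$) that it permutes the summands and hence fixes the one containing $F$. The only cosmetic difference is that you deduce the permutation of summands from the $g_3g_2g_1$ factorization rather than directly from $hf(\Sigma)=\Sigma$.
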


Incompressible surfaces that are invariant under homeomorphisms play important roles in the study of 3-manifolds, in particular with respect to the effect of the induced action on their fundamental group. 
We say that an embedded 2-torus $T$ in a 3-manifold $M$ is an {\em Anosov torus} if there exists a diffeomorphism $f$ on $M$ such that $f(T)=T$ and the induced action of $f$ over the fundamental group of $T$ is hyperbolic, or equivalently $f|_T$ is (isotopic to) an Anosov map.
 
Motivated by problems in partially hyperbolic dynamics, F. Rodriguez Hertz, M. Rodriguez Hertz and R. Ures proved that only a few irreducible 3-manifolds admit Anosov tori:
 
 \begin{theorem}\label{HHU1}{\normalfont(\cite[Theorem 1.1]{HHU}).}
 A closed oriented connected irreducible 3-manifold admits an Anosov torus if and only if it is one of the following: 
 \begin{itemize}
\item[(1)] the 3-torus; 
\item[(2)] the mapping torus of $-id$; 
\item[(3)] the mapping torus of a hyperbolic automorphism.
\end{itemize}
 \end{theorem}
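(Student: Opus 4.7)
I would prove the easy (converse) direction by exhibiting an Anosov torus in each of the three manifolds. Each is a $T^2$-bundle over $S^1$; for $T^3$ and the mapping torus of $-\mathrm{id}$, pick any hyperbolic $B \in \mathrm{SL}(2,\mathbb{Z})$ and use that $-\mathrm{id}$ is central in $\mathrm{GL}(2,\mathbb{Z})$ so that $(x,t)\mapsto(Bx,t)$ descends to a self-diffeomorphism restricting to $B$ on the fiber; for the mapping torus of a hyperbolic $A\in\mathrm{SL}(2,\mathbb{Z})$, the time-$1$ map of the suspension flow preserves the zero-section fiber and restricts to $A$ on it.

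For the forward direction, let $T$ be an Anosov torus with diffeomorphism $f\co M\to M$ realizing the Anosov action. The first step is to argue $T$ is incompressible: the kernel of $\pi_1(T)\to\pi_1(M)$ is $f_*$-invariant, but an Anosov automorphism of $\mathbb{Z}^2$ is $\mathbb{Q}$-irreducible, and the full-kernel case is excluded by irreducibility of $M$ (it would force $T$ to bound a ball). Hence $\mathbb{Z}^2\hookrightarrow\pi_1(M)$, so $M$ is neither hyperbolic nor spherical. By Thurston's geometrization, either $M$ is geometric among the six remaining geometries, or $M$ admits a nontrivial JSJ decomposition. The plan is to rule out every possibility except the flat and Sol cases, and then pin down precisely which manifolds arise.

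The main obstacle is the systematic case elimination. For $M$ of geometry $\mathrm{Nil}$, $\mathbb{H}^2\times\mathbb{R}$, or $\widetilde{\mathrm{PSL}(2,\mathbb{R})}$, the Seifert fibration is canonical up to isotopy (its regular fiber generates the center of $\pi_1(M)$), every incompressible torus is vertical, and any self-homeomorphism preserves the regular fiber class in $\pi_1(T)$ up to sign; this forces $f_*|_T$ to have an eigenvalue $\pm 1$, contradicting Anosov. For flat $M$, using the classification of the six orientable flat $3$-manifolds, the holonomy acting on the torus fiber must commute with $f_*|_T$; since the centralizer of a hyperbolic element of $\mathrm{GL}(2,\mathbb{Z})$ is virtually cyclic, only the trivial holonomy (giving $T^3$) and the holonomy $\{\pm I\}$ (giving the mapping torus of $-\mathrm{id}$) admit an Anosov commutant. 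Sol manifolds are precisely the mapping tori of hyperbolic elements of $\mathrm{SL}(2,\mathbb{Z})$, handling case (3). Finally, in the nontrivial JSJ case, $T$ is either interior to a JSJ piece (necessarily Seifert, hence ruled out as above) or isotopic to a JSJ torus; in the latter case, the regular fiber directions of the two adjacent Seifert pieces give two distinct primitive integer lines in $\pi_1(T)$ preserved by $f_*$ (or by $f_*^2$, if the sides are swapped), and a $\mathrm{GL}(2,\mathbb{Z})$-matrix preserving two distinct primitive integer lines has eigenvalues in $\{\pm 1\}$ --- precluding Anosov, which completes the argument.
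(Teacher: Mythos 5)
First, a point of comparison: the paper itself gives no proof of this statement --- it is quoted verbatim from \cite{HHU} and used as a black box in the proof of Theorem \ref{main1}. So your sketch can only be measured against the argument in that reference, which it broadly parallels: incompressibility of Anosov tori, then a geometrization/JSJ case analysis in which an invariant Seifert fibre class forces an eigenvalue $\pm 1$. The converse direction and the Nil, $\mathbb{H}^2\times\mathbb{R}$, $\widetilde{\mathrm{PSL}(2,\mathbb{R})}$ and flat cases are in order at the level of a sketch.

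There are, however, genuine gaps in the forward direction. The most serious is the assertion that ``Sol manifolds are precisely the mapping tori of hyperbolic elements of $\mathrm{SL}(2,\mathbb{Z})$'': this is false. Closed orientable Sol manifolds also include the torus semi-bundles, i.e.\ unions of two twisted $I$-bundles over the Klein bottle glued along their boundary tori by a hyperbolic map. These are not on the list in the theorem, so as written your argument would wrongly place them there; you must instead show they admit no Anosov torus (for instance: the only incompressible torus up to isotopy is the canonical one $T$, and any automorphism of $\pi_1(T)\cong\mathbb{Z}^2$ that extends over an adjacent Klein-bottle group must preserve the cyclic subgroup generated by the squares of the orientation-reversing elements, hence has an eigenvalue $\pm 1$). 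Second, in the JSJ case your argument for $T$ isotopic to a JSJ torus assumes both adjacent pieces are Seifert fibered; if an adjacent piece is hyperbolic you need a different mechanism --- the mapping class group of a finite-volume hyperbolic piece is finite, so a power of $f$ acts on $\pi_1(T)$ with finite order, again contradicting hyperbolicity. A smaller imprecision: in the incompressibility step a torus never ``bounds a ball''; in the full-kernel case $T$ lies in a $3$-ball and there bounds a solid torus by Alexander's theorem, whose meridian gives an $f_*$- or $f_*^2$-invariant primitive class, and that is how this case is actually excluded.
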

 
As pointed out in~\cite{HHU}, it is easy to construct arbitrarily many different reducible 3-manifolds that admit Anosov tori. Indeed, once a manifold $M$ admits an Anosov torus, then the connected sum of $M$ with any other 3-manifold admits an Anosov torus as well; see \cite[Remark 2.6]{HHU}.
Thus, in view of Theorem \ref{HHU1}, the following interesting question arises, which was our inspiration for this paper:
 
 \begin{question}{\normalfont(\cite[Question 1.4]{HHU}).}\label{q:HHU}
 Let $M$ be a closed oriented connected reducible 3-manifold. 
 If $M$ admits an Anosov torus, is one of its prime summands necessarily one of the 3-manifolds listed in Theorem \ref{HHU1}?
 \end{question}
 
 As an application of Theorem \ref{main theorem}, we answer Question \ref{q:HHU} in the affirmative:
 
\begin{theorem} \label{main1}
 A closed oriented connected 3-manifold admits an Anosov torus if and only if one of its prime summands is one of the following: 
\begin{itemize} 
\item[(1)] the 3-torus; 
\item[(2)] the mapping torus of $-id$; 
\item[(3)] the mapping torus of a hyperbolic automorphism.
\end{itemize}
 \end{theorem}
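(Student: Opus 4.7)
The plan is to establish both directions of the biconditional. For the \emph{if} direction, if a prime summand $M_i$ of $M$ is one of the three listed manifolds, Theorem \ref{HHU1} supplies a diffeomorphism $\phi$ of $M_i$ with an invariant 2-torus $T\subset M_i$ on which $\phi|_T$ is Anosov. After an isotopy of $\phi$ supported in $M_i\setminus T$, one may assume $\phi$ is the identity on a small ball $B_i\subset M_i\setminus T$; then $M_i\setminus\mathrm{int}\,B_i$ is realized as a summand of $M$ and $\phi$ extends by the identity across the remaining connected sum, producing a diffeomorphism of $M$ that makes $T$ an Anosov torus (cf.\ \cite[Remark 2.6]{HHU}).

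For the \emph{only if} direction, suppose $T\subset M$ is an Anosov torus witnessed by a diffeomorphism $f\colon M\to M$. The first step is to verify that $T$ is incompressible. The kernel $K$ of $\pi_1(T)\to\pi_1(M)$ is an $f_*$-invariant subgroup of $\ZZ^2$, and since $f_*$ is hyperbolic in $\GL(2,\ZZ)$ its eigenvalues are irrational, so no rank-one subgroup of $\ZZ^2$ is $f_*$-invariant. Combined with the loop theorem (a compressing disc would contribute a primitive class to $K$), this rules out every nontrivial $K$ except possibly $K=\ZZ^2$. The degenerate case is handled by the fact that a null-homotopic embedded torus in an orientable 3-manifold bounds a solid torus (or lies in a ball after compression, via Kneser--Milnor), forcing $f_*$ to preserve the meridional $\ZZ$-subgroup and contradicting hyperbolicity.

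With $T$ incompressible, Theorem \ref{main theorem} and Corollary \ref{preserve summands} apply simultaneously: after an ambient isotopy, $T$ sits inside a prime summand $M_i\subset M$ (realized as $M_i\setminus\mathrm{int}\,B_i$) and there is a finite composition $h$ of slide homeomorphisms and isotopies with $h|_T=\mathrm{id}$ such that $hf$ preserves both $M_i$ and $T$. In particular $(hf)|_T=f|_T$ is Anosov. The map $hf$ restricts to a self-homeomorphism of $M_i\setminus\mathrm{int}\,B_i$ sending $\partial B_i$ to itself; by Alexander's trick it extends across a cap ball to a self-homeomorphism of the closed irreducible manifold $M_i$, still acting as the Anosov map $f|_T$ on $T$. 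Hence $T$ is an Anosov torus in the irreducible manifold $M_i$, and Theorem \ref{HHU1} forces $M_i$ to be one of the three listed manifolds.

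The main obstacle I anticipate is the incompressibility step, specifically ruling out the degenerate case where $T$ is null-homotopic: this requires geometric input about embedded null-homotopic tori in 3-manifolds (via Kneser--Milnor and the sphere/loop theorems) to extract a preserved rank-one subgroup of $\pi_1(T)$ contradicting hyperbolicity of $f_*$. Once incompressibility is secured, the rest of the argument is a clean application of Corollary \ref{preserve summands} combined with the irreducible classification of \cite{HHU}; the decisive new ingredient is the conclusion $h|_T=\mathrm{id}$ from Theorem \ref{main theorem}, which identifies the potentially unruly action of $hf|_T$ with the genuinely Anosov action $f|_T$.
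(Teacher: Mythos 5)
Your proposal is correct and follows essentially the same route as the paper: establish that the Anosov torus is incompressible, isotope it into a prime summand, apply Theorem \ref{main theorem} and Corollary \ref{preserve summands} to obtain $hf$ preserving that summand with $(hf)|_T=f|_T$ still Anosov, extend over the cap ball to the closed irreducible summand, and invoke Theorem \ref{HHU1}. The only differences are that the paper simply quotes \cite{HHU2} for incompressibility of Anosov tori (so your somewhat sketchy ad hoc derivation of that fact, whose degenerate cases you yourself flag as delicate, can be replaced by a citation), and the \emph{if} direction is the observation of \cite[Remark 2.6]{HHU} already recorded in the introduction.
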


\subsection*{Outline} In Section \ref{mapping class group} we recall the descriptions of the  four types of homeomorphisms given in Theorem \ref{Mc1}. In Section \ref{proof of theorem} we prove Theorem \ref{main theorem}, Corollary \ref{preserve summands} and Theorem \ref{main1}.
 
 \section{Mapping class groups of reducible 3-manifolds}\label{mapping class group}
 
In this section we recall the isotopy types of the orientation-preserving homeomorphisms of $3$-manifolds given in Theorem \ref{Mc1}. We follow McCullough's survey paper \cite{Mc} for the description of the mapping class groups of reducible 3-manifolds. 
Suppose $M$ is a closed oriented reducible 3-manifold. By the Kneser-Milnor theorem, $M$ admits a non-trivial decomposition
$$M = M_ 1 \# M_ 2  \# \cdots \# M_n \# ( \#_m S ^1  \times S^2 ),$$    
where the summands  $M_i$ are irreducible and $m\ge 0$.  

Consider the following construction of $M$: Remove $n + 2m$ open 3-balls from a 3-sphere to  obtain a punctured 3-cell  $W$ with boundary components $S_1, S_2, ... , S_n, S_{n+1,0}, S_{n+1,1}, ..., S_{n+m,0}, S_{n+m,1}$. For each summand $M_i$, $i=1,...,n$, choose a 3-ball $D_i$ and attach $M_i'= M_i - \inte(D_i)$ to  $S_i$ along $\partial D_i$. For  $n+ 1\le j \le n+m$, let $S_j \times I$ be  a  copy of  $S^2\times I$  attached  to   $W$ by identifying $S_j \times   0$ with  $S_{j,0}$ and $S_{j} \times 1$ with $S_{j,1}$ to form an $S^1  \times  S^2$ summand.

Using the above construction, we now describe the four types of homeomorphisms of $M$ given in Theorem \ref{Mc1}. Note that two orientation-preserving homeomorphisms of $W$ are isotopic if and only if they induce the same permutation on the set of boundary components of  $W$. 

\medskip

{\em 1. Homeomorphisms  preserving  summands.} These are the homeomorphisms that restrict to the identity on $W$. They form a subgroup of $\mathrm{Homeo}(M)$
isomorphic  to  $\Pi_{i=1}^n \mathrm{Homeo}( M_i \ \text{ rel }   D_i) \times  \Pi_{j=1}^m  \mathrm{Homeo} (S^2  \times  I \   \text{ rel }  S^ 2 \times \partial I).$  
Note that  $\mathrm{Homeo} (S^2  \times  I \ \text{ rel } S^ 2 \times \partial I)$  has  two path components, that of the identity and that of a rotation about $S^2  \times  1/2$.

\medskip

{\em 2. Interchanges of homeomorphic summands.} Suppose $M_i$ and  $M_j$ are orientation-preserving homeomorphic summands. Then we can construct a homeomorphism of $M$ fixing all other summands, leaving $W$ invariant, and interchanging $M'_i$ and $M'_j$. 
Similarly, we can interchange two $S^1 \times S^2$ summands, leaving $W$ invariant.

\medskip

{\em 3. Spins of $S^ 1 \times S^ 2$ summands.} For each $n + 1\le  j\le  n + m$, we can construct a homeomorphism of $M$ fixing all other summands, leaving $W$ invariant, interchanging $S_{j,0}$  and $S_{j,1}$,  and restricting to an orientation-preserving homeomorphism that interchanges the boundary components of $S_j \times I$.

\medskip

{\em 4. Slide homeomorphisms.} For $i\le n$, let $\hat M$ be obtained from $M$ by replacing   $M_i$ with  a  3-cell  $E$. Let  $\alpha$ be an arc in $\hat M$ meeting $E$ only  in  its endpoints. Choose an isotopy  $J_t$ of $\hat M$ with $J_ 0 = id_{\hat M}$ and $J _1 |_E  = id_E$, so  that $J_t$ moves  $E$ around $\alpha$.  A {\em slide homeomorphism on $M$ that slides $M_i$  around $\alpha$} is a homeomorphism $h\colon M\to M$ defined by $h|_{M - M'_i} = J_1 |_{\hat M - E}$ and $h|_{M'_i}=  id|_{M'_i}$. Choosing a different $J_t$, it changes $h$ by an isotopy and, possibly, by a rotation about  $S_i$. Thus a choice of $\alpha$ might determine two isotopy classes of a slide homeomorphism. 

Similarly, we can slide either end of $S_j \times I$ around an  arc in  $M - S_j \times  (0,1)$. 

Note, finally, that if $\alpha_1$ and $\alpha_2$ are two arcs meeting $E$ only in their endpoints, and $\alpha$ is an arc representing the product of $\alpha_1$ and $\alpha_2$ in  
$\pi_ 1 (M - M'_i, S_i)$, then a slide of $M_i$ around $\alpha$ is isotopic  to a composite of slides around $\alpha_ 1$ and $\alpha_ 2$. Similarly for sliding ends of $S_j \times  I$'s. It follows that the subgroup of $\mathrm{Diff}(M)$ generated by slide homeomorphisms is finitely generated.

\section{Controlled slidings and isotopies}
\label{proof of theorem}

We now prove Theorem \ref{main theorem}. Given a homeomorphism $f$ on $M$, we will perform slide homeomorphisms and isotopies, controlling their effect on each $f$-invariant incompressible surface.

 \begin{proof}[Proof of Theorem \ref{main theorem}]
 Suppose $F$ is an incompressible surface in $M$ and $f\colon M\to M$ is a homeomorphism so that $f(F)=F$.  

\medskip

The proof of the following lemma is straightforward:

\begin{lemma}\label{phi(F)}
For any homeomorphism $\varphi\colon M\to M$, $\varphi(F)$ is an $\varphi f\varphi^{-1}$-invariant incompressible surface in $M$.
\end{lemma}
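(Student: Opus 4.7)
The plan is to verify the two claims in the conclusion separately, each by a direct computation.

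First, I would check that $\varphi(F)$ is $\varphi f\varphi^{-1}$-invariant by a one-line computation. Applying $\varphi f\varphi^{-1}$ to the set $\varphi(F)$ yields
\[
(\varphi f\varphi^{-1})(\varphi(F)) \;=\; \varphi\bigl(f(\varphi^{-1}(\varphi(F)))\bigr) \;=\; \varphi(f(F)) \;=\; \varphi(F),
\]
where the last equality uses the hypothesis $f(F)=F$.

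Second, I would show that $\varphi(F)$ is incompressible in $M$. Since $\varphi$ is a self-homeomorphism, $\varphi(F)$ is a closed oriented connected embedded surface in $M$ homeomorphic to $F$; in particular $\varphi(F)\neq S^2$. I would then appeal to the fundamental-group characterization of incompressibility recalled in the introduction: choosing a basepoint $x\in F$ and the basepoint $\varphi(x)\in \varphi(F)$, the restriction $\varphi|_F\colon F\to \varphi(F)$ and $\varphi\colon M\to M$ fit into a commutative square of inclusion-induced maps on fundamental groups, in which the two vertical arrows are isomorphisms. Injectivity of $\pi_1(F)\to\pi_1(M)$, which holds by the incompressibility of $F$, therefore forces injectivity of $\pi_1(\varphi(F))\to\pi_1(M)$, giving incompressibility of $\varphi(F)$.

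There is no real obstacle here—both claims are formal consequences of the definitions—so in the write-up I would simply record these two computations; basepoint choices are the only mild bookkeeping needed, and they are harmless because incompressibility is independent of basepoint within the connected surface $F$.
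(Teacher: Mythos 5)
Your proof is correct and is exactly the standard argument; the paper itself gives no proof, merely remarking that the lemma is straightforward, and your two computations (the conjugation identity for invariance and the commutative square of inclusion-induced maps for incompressibility) are precisely what is meant.
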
 
 
Let $\Sigma$ be the union of the $n+2m$ prime decomposition 2-spheres as described in Section \ref{mapping class group}.
By a standard argument in 3-manifold topology, there is an isotopy $H_t$ of $M$ so that $H_1(F)$ is disjoint from $\Sigma$. 
Thus we can assume that $F\cap \Sigma=\emptyset$, and so $F$ lies in some $M'_i$, say in $M'_1$ (possibly after replacing $f$ by $H_1fH_1^{-1}$ and $F$ by $H_1(F)$, according to Lemma \ref{phi(F)}). 
  It is then clear that $M_1$ is not one of the $S^2\times S^1$ summands. 
  Also, $F\cap \Sigma=\emptyset$ implies that $f(F)\cap f(\Sigma)=\emptyset$. Since $f(F)=F$, we have both 
 $$F\cap \Sigma=\emptyset \ \text{ and } \ F\cap f(\Sigma)=\emptyset. \qquad (1)$$
 
We may assume that $\Sigma$ and $f(\Sigma)$ meet transversely. The major part of the proof of Theorem \ref{Mc2} in \cite{Mc} is to modify $f$ to reduce the number of components of  $\Sigma\cap f(\Sigma)$ by a sequence of slide homeomorphisms and isotopies 
 so that  $\Sigma\cap f(\Sigma)=\emptyset$, and finally make a further isotopy so that   $\Sigma=f(\Sigma)$. 
 And $h$ in Theorem \ref{Mc2}  is the composition 
 of those slides and isotopies. 
 
 \begin{proposition}\label{not touching F}
Each factor of $h$ can be chosen so that it does not touch $F$ during its sliding/isotopy process.  
\end{proposition}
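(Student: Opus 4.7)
The plan is to go factor by factor through McCullough's construction of $h$, showing that each slide and each isotopy can be chosen with support disjoint from $F$, hence restricting to the identity on $F$. The two standing geometric facts we use are that $F$ lies in a single prime summand $M'_1$, and that by~(1), $\Sigma \cup f(\Sigma)$ is disjoint from $F$.

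For the isotopies that McCullough uses to reduce $\#(\Sigma \cap f(\Sigma))$, each is supported in an arbitrarily small tubular neighborhood of an innermost disk on a component of $\Sigma \cup f(\Sigma)$. Since $\Sigma \cup f(\Sigma)$ is disjoint from the compact surface $F$, such a neighborhood may be chosen disjoint from $F$, and the isotopy is then the identity on $F$ throughout.

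For a slide of the summand $M_i$ along an arc $\alpha \subset \hat M - E$ with endpoints on $S_i$, I would consider two cases. If $i = 1$, then $\alpha$ lies in $\hat M - E = M \setminus M'_1$ and is automatically disjoint from $F \subset M'_1$; moreover, the slide is by definition the identity on $M'_1 \supset F$, and the realizing ambient isotopy can be arranged to have support in a tube around $\alpha$ together with a collar of $S_1$ taken on the $W$-side of $S_1$, so that the support lies in $M \setminus \inte(M'_1)$ and is disjoint from $F$. If $i \neq 1$, then $\alpha$ may a priori intersect $F \subset M'_1 \subset \hat M - E$, and the plan is to homotope $\alpha$ rel endpoints (which lie on the connected sphere $S_i \subset M \setminus F$) until $\alpha \cap F = \emptyset$; this uses the incompressibility and two-sidedness of $F$ together with an innermost-disk argument to cancel intersection pairs, supplemented if necessary by the composition rule for slides recalled in Section~\ref{mapping class group}, which allows the required slide to be decomposed as a composition of slides along simpler arcs that can be pushed into the connecting piece $W$ (itself disjoint from $F$). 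Once $\alpha \cap F = \emptyset$, a tubular neighborhood of $\alpha \cup S_i$ supporting the slide may be chosen disjoint from $F$.

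The main technical obstacle is this last case, $i \neq 1$: verifying that every sliding arc required by McCullough's algorithm in the course of reducing $\#(\Sigma \cap f(\Sigma))$ can be homotoped off $F$, rel its endpoints on $S_i$. It is here that the incompressibility of $F$ is used essentially, via the standard innermost-disk cancellation argument applied to pairs of crossings.
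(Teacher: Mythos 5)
Your overall strategy (control the support of each slide and each isotopy, and arrange it to miss $F$) is the right one, but the key step is resolved incorrectly. For a slide whose arc must pass through the summand $M'_1$ containing $F$, you propose to take the arc McCullough's algorithm hands you and homotope it off $F$ rel its endpoints on $S_i$, using incompressibility and an innermost-disk cancellation. This does not work in general: a closed incompressible surface can carry essential information in $\pi_1(M_1)$ that obstructs such a homotopy. For instance, if $M_1$ fibers over $S^1$ with fiber $F$, an arc entering $M'_1$ and winding once around the base circle before exiting meets $F$ in a homotopically essential way, and no innermost-disk argument removes the intersection; incompressibility controls compressing discs \emph{on} $F$, not the homotopy classes of arcs \emph{transverse} to $F$. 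The paper's proof never homotopes an arc off $F$. Instead it \emph{constructs} the sliding arc $\alpha$ disjoint from $F$ from the outset: its middle portion is chosen to shadow an arc $\alpha_0\subset f(\Sigma)\cap M'_i$, which misses $F$ by condition (1), and its first and third portions lie in $W$, which misses $F$ because $F\subset M'_1$. Then Lemma \ref{slide} gives a supporting neighborhood $N(S_k\cup\alpha)$ disjoint from $F$. The availability of such an $\alpha_0$ on $f(\Sigma)$ is exactly the structural input your proposal is missing, and it is where condition (1) does the real work.

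A second, smaller gap: you assert that each intersection-reducing isotopy is supported in an arbitrarily small neighborhood of an innermost disc. It is not; the isotopy must sweep the innermost disc $E_1$ across the entire 3-ball $B$ bounded by $E_1\cup E'$ (with $E'\subset S_i$), so its support contains a neighborhood of $B$, and $B$ need not be small. Disjointness of $B$ from $F$ is not automatic from $F\cap(\Sigma\cup f(\Sigma))=\emptyset$; one needs the incompressibility of $F$ to pass from $F\cap\partial B=\emptyset$ to $F\cap B=\emptyset$ (a closed incompressible surface cannot lie in a ball). The same point recurs at the final step, where $f(\Sigma)$ is pushed onto $\Sigma$ across a product region $\Sigma\times[0,1]$ whose interior must also be shown to miss $F$. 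So incompressibility is needed precisely in the isotopy steps, where your write-up does not invoke it, rather than in the arc-adjustment step, where invoking it does not suffice.
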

 
\begin{proof}[Proof of Proposition \ref{not touching F}]
We are going to prove this claim by examining at each step in the proof of Theorem \ref{Mc2} (Theorem \ref{Mc1}) the effect of our choice of $h$ on $F$.

Let $C$ be a circle of intersection that is innermost on $f(\Sigma)$, so that $C$ bounds a disc $ E_1$ in $f(\Sigma)$ with $\inte(E _1)$ disjoint from $\Sigma$. 

\medskip

{\em Case 1.}
If $E_ 1\subset  M_i$, then $E_1$ separates a 3-ball $B$ from $M_i'$ (since $M_i$ is a prime factor) and $\partial B=E_1\cup E'$, where $E'$ is a disk in $S_i-C$. Since $E_1\subset f(\Sigma)$ and $E'\subset \Sigma$, we conclude by (1) that $F\cap \partial B=\emptyset$. Since $F$ is incompressible, we indeed have $F\cap B=\emptyset$. We choose  a regular neighborhood $N(B)$ of $B$ such that $F\cap N(B)=\emptyset$.
Then there is an isotopy $s$ pulling $E_ 1$ into $W$ across $B$ with support in $N(B)$, eliminating $C$ (and possibly other circles of intersection as well). Clearly this isotopy process does not touch $F$, therefore (1) still holds when we replace $f$ with $sf$. For simplicity, we will continue using $f$ to denote $sf$. We call the isotopy that we just performed a {\em controlled isotopy}; see Figure \ref{f:controlledisotopy}.

Similarly, if $E_1\subset S_j\times I$, $n + 1 \leq j \leq n + m$,
then $C$ can be eliminated by a controlled isotopy which does not affect $F$. 

\begin{figure}
\labellist
\pinlabel $B$ at 65 105
\pinlabel $E_1$ at 31 105
\pinlabel $M'_i$ at 25 135
\pinlabel $E'$ at 101 105
\pinlabel $S_i$ at 55 173
\pinlabel $S_k$ at 245 123
\pinlabel $M'_k$ at 221 71
\endlabellist
\centering
\includegraphics[width=9cm]{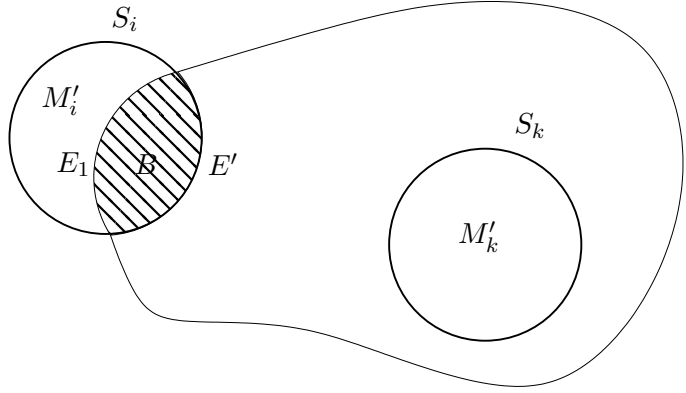}
\caption{\small A controlled isotopy pulls $E_1$ into $W$ eliminating $C$.}
\ \newline \
\label{f:controlledisotopy}
\end{figure}

\medskip

{\em Case 2.} 
 Suppose now  $E_ 1\subset W$ and $C\subset S_i$ for some $1\leq i\leq n$. We may assume that the interior of the other disc bounded by $\partial E_1$ in $f(\Sigma)$ intersects $\Sigma$, otherwise it must lie in $M'_i$ and so $C$ can be eliminated by a controlled isotopy as in Case 1. 
 Thus we can choose an arc $\alpha_0$ in $f(\Sigma)\cap M_i'$ with one endpoint in $C$ and the other endpoint in $S_i-C$. Denote by $E_2$  the disc in $S_i$  which is the closure of the component of $S_i-C$ that does not contain the other endpoint of $\alpha_0$.   
The  2-sphere $E_1 \cup E_2$ bounds a punctured 3-cell $W_1 \subset W$. Suppose $M_k$ (where $k\ne i$) is attached to $W_1$. There is an arc $\alpha$ with
endpoints in $S_k$ which consists of three portions: First $\alpha$ travels in $W_1$ to $S_i$,  then it goes through $M'_i-f(\Sigma)$ emerging in $W- W_1$, following along near $\alpha_0$, and finally it travels through $W$ back to $S_k$; see Figure \ref{f:controlledslidingsupport} (or Figure 1 in \cite{Mc}).  
 
Since $F\cap f(\Sigma)=\emptyset$ and  $\alpha_0\subset f(\Sigma)$, we can choose the second portion of $\alpha$ close enough to $\alpha_0$ so that it does not touch $F$.
Moreover, the first portion and the third portion of $\alpha$ also do not touch $F$, since they stay in $W$ and meet the second portion at its end points, and $F$ lies in $M'_1$. So we conclude that $\alpha$ does not touch $F$. By (1) we have 
$F\cap (S_k\cup \alpha)=\emptyset$. Therefore we can further find a regular neighborhood $N (S_k\cup \alpha)$ (the region bounded by bold lines in Figure \ref{f:controlledslidingsupport}) such that 
$F\cap N(S_k\cup \alpha)=\emptyset$. Now we quote the following fact whose proof follows rather directly from the definition, and which has been carefully presented in \cite{Z} with a fine figure illustration and precise formulas:

\begin{lemma}\label{slide}
 If $s$ is a slide homeomorphism along an arc $\alpha$ with ends in $S_k$, then $s$ is supported in a regular neighborhood $N(S_k\cup\alpha)$.
 \end{lemma}

\begin{figure}
\labellist
\pinlabel $\alpha_0$ at 87 155
\pinlabel $\alpha$ at 179 133
\pinlabel $E_1$ at 350 161
\pinlabel $M'_i$ at 95 109
\pinlabel $E_2$ at 139 99
\pinlabel $W_1$ at 163 75
\pinlabel $S_i$ at 101 169
\pinlabel $S_k$ at 289 99
\pinlabel $M'_k$ at 265 65
\pinlabel $N(S_k\cup\alpha)$ at 124 42
\endlabellist
\centering
\includegraphics[width=11cm]{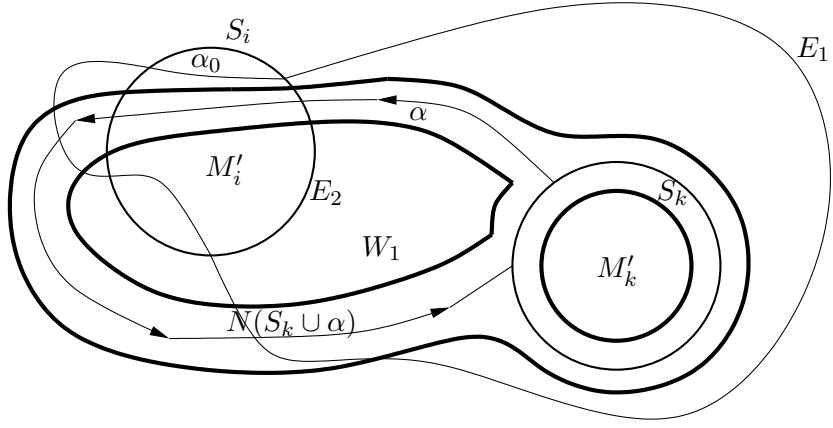}
\caption{\small A controlled sliding along $\alpha$ supported in $N(S_k\cup\alpha)$.}
\label{f:controlledslidingsupport}
\end{figure}

Slide $M_k$ around  $\alpha$, that is compose $f$ with the slide homeomorphism $s$ that slides $M_k$ around  $\alpha$, to reduce a puncture in $W_1$. By Lemma \ref{slide} and the fact that $F\cap N(S_k\cup \alpha)=\emptyset$, we have that the sliding $s$ does not touch $F$. Therefore (1) still holds when we replace $f$ with $sf$.
Again, for simplicity we still use $f$ to denote $sf$. We call the slide homeomorphism that we just performed a {\em controlled sliding} (see Figure \ref{f:controlledslidingsupport}).

Similarly, we slide each end of an $S_j\times I$ attached to $W_1$ without touching $F$.

We continue performing controlled slidings for each $M_k$ attached to $W_1$ and each end of an $S_j\times I$ attached to $W_1$ until we reach at the point where $E_1 \cup  E_2$ bounds a 3-ball in $W$. Now $C$ can be eliminated by a controlled isotopy. 


Finally, suppose $C\subset S_{j,0}$, for some $n+1\leq j\leq n+m$. If there is no arc in $f(\Sigma)\cap(S_j\times I)$ with one end in $C$ and the other end in $S_{j,1}$, then $C$ can be eliminated by a controlled isotopy. 
If there is an arc in $f(\Sigma)\cap(S_j\times I)$ with one end in $C$ and the other end in $S_{j,1}$, then we choose $E_2$ so that $S_{j,1}$ is not a boundary component of $W_1$ and slide as above summands $M_k$ attached to $W_1$ and each end of an $S_k\times I$ attached to $W_1$ until $C$ is eliminated. 

\medskip

Repeating the above controlled slidings and controlled isotopies as far as needed, we reach $f(\Sigma)\cap \Sigma =\emptyset$.

\medskip

Note that no component of $\Sigma$ (resp. of $f(\Sigma)$) can bound a 3-ball in $M$. For each  $M_i'$, there are two cases (similarly for each $S_j\times I$):

\begin{itemize}
\item[$\bullet$]  There is some $f(S_k)\subset M_i'$. Since $M_i$ is a prime factor, each 2-sphere in $M'_i=M_i - \inte(D_i)$ which does not bound a 3-ball must be parallel to $S_i$, that is to say,  $f(S_k)$ and $S_i$ bound a 3-manifold homeomorphic to $S^2\times [0,1]$ in $M$. 
\item[$\bullet$]  $M_i'\subset f(M_k')$ for some $k$. By the same reason as above, $f(S_k)$ must be  parallel to $S_i$. 
\end{itemize}

Both cases imply that $\Sigma$ and 
$f(\Sigma)$ bound a $\Sigma \times [0,1]$, a disjoint union of $n+2m$ copies of $S^2\times [0,1]$, in $M$.
Clearly we can isotope $f(\Sigma)$ to $\Sigma$ by pushing $f(\Sigma)$ across the $\Sigma \times [0,1]$ and then reach $f(W) = W$.
Since (1) holds and $F$ is incompressible, we have $F\cap N(\Sigma \times [0,1])=\emptyset$ for some regular neighborhood of $\Sigma \times [0,1]$, therefore the last isotopy can be made without touching $F$. 
This finishes the proof of Proposition \ref{not touching F}.
\end{proof}
 
We have now completed the proof of Theorem \ref{main theorem}.
 \end{proof}

 \begin{proof}[Proof of Corollary \ref{preserve summands}]
 Suppose $F$ is an incompressible surface in $$M=M_1\#\cdots\# M_n\#(\#_m S^2\times S^1),$$ where each $M_i$ is irreducible and $m\geq 0$. As explained in the proof of Theorem \ref{main theorem} (cf. Lemma \ref{phi(F)}), we may assume that $F$ lies in $M'_1=M_1-\inte(D_1)$.  
 
 Let $h$ be a composition of controlled slidings and isotopies as performed in Theorem \ref{main theorem} so that 
 $$hf({\Sigma})=g_3g_2 g_1({\Sigma})={\Sigma}. \qquad$$
 This means that $hf$ permutes the prime factors of $M$. Since moreover $h$ restricts to the identity on $F$ and $f(F)=F$, we deduce that
 $hf(F)=f(F)=F$. Since $F\subset M_1'$ we conclude that 
 $$hf(M'_1)=M_1'.$$
 \end{proof}

 \begin{proof}[Proof of Theorem \ref{main1}]
Let $M$ be a closed oriented connected $3$-manifold and suppose $T\subset M$ is an Anosov torus, that is, there is a diffeomorphism $f\colon M\to M$ such that $f(T)=T$ and the induced homomorphism $f_*\colon\pi_1(T)\to\pi_1(T)$ is hyperbolic.

Since every Anosov torus is incompressible \cite{HHU2}, we can assume that $T$ lies in some $M'_i$, say in $M'_1$ (cf. Lemma \ref{phi(F)}). By Theorem \ref{main theorem} and Corollary \ref{preserve summands} there is a finite  composition $h$ of slide homeomorphisms and isotopies so that
 $$hf(T)=T=f(T) \ \text{ and } \ hf(M'_1)=M_1'.$$
 
 We extend $hf|_{M_1'}$ to a diffeomorphism $g\colon M_1\to M_1.$ We have
$$g|_T=hf|_T=f|_T,$$
and so $g|_{T}\colon T \to T$ is Anosov. 
Since $M_1$ is irreducible, $M_1$ must be one of the three types of 3-manifolds listed in 
Theorem \ref{HHU1}. This finishes the proof.
\end{proof}


\begin{thebibliography}{123}

\bibitem{Bon}
F. Bonahon, {\it Cobordism of automorphisms of surfaces}, Ann. Sci. \'Ecole Norm. Sup. {\bf 16} (1983), 237--270.

\bibitem{CR}
E. C\'esar de S\'a and C. Rourke, {\it The homotopy type of homeomorphisms of 3-manifolds}, 
Bull. Amer. Math. Soc. (N.S.) {\bf 1}, no. 1 (1979), 251--254. 

\bibitem{He}
J. Hempel, {\em 3-manifolds},
Princeton University Press And University of Tokyo Press, 1976.

\bibitem{Mc} D. McCullough, {\it Mappings of reducible 3-manifolds}, Proceedings of the Semester in Geometric and Algebraic Topology, 
Warsaw, Banach Center (1986), 61--76.

\bibitem{HHU} F. Rodriguez Hertz, M. Rodriguez Hertz and R. Ures,
{\it Tori with hyperbolic dynamics in 3-manifolds}, J. Mod. Dyn. {\bf 5}, no. 1 (2011), 185--202.

\bibitem{HHU2}
F. Rodriguez Hertz, M. Rodriguez Hertz and R. Ures, {\it Partial hyperbolicity and ergodicity in dimension three}, J. Mod. Dyn. {\bf 2}, no. 2 (2008), 187--208.  

\bibitem{Z} X. Zhao, {\it On the Nielsen numbers of slide homeomorphisms on 3-manifolds.} Topology Appl. {\bf 136}, no. 1-3 (2004), 169--188. 

\end{thebibliography}
\end{document}